\newtheorem{newthm}{Theorem}
\newtheorem{theorem}{Theorem}[section]
\newtheorem{lemma}[theorem]{Lemma}
\newtheorem{proposition}[theorem]{Proposition}
\newtheorem{corollary}[theorem]{Corollary}
\theoremstyle{remark}
\theoremstyle{plain}
\numberwithin{equation}{section}
\def\MMM{{\cal M}}
\def\g{\gamma}
\def\R{\mbox{$\mathbb R$}}
\def\C{\mbox{$\mathbb C$}}
\def\T{\mbox{$\mathbb T$}}
\def\D{\mathbb D}
\def\Z{\mbox{$\mathbb Z$}}
\def\lv{ \left(\begin{matrix} }
 \def\rv{\end{matrix}\right)}
\def\g{\gamma}
\def\cal{\mathcal}
\def\dw{{\dw}}
\newcommand{\mylabel}[1]{\label{#1}}
\newcommand{\REFEQN}[1] { \begin{equation}\mylabel{#1} }
\newcommand{\ENDEQN}{\end{equation}}
\newcommand{\REFTHM}[1] { \begin{theorem}\mylabel{#1} }
\newcommand{\ENDTHM}{\end{theorem}}
\newcommand{\REFNTH}[1] { \begin{newthm}\mylabel{#1} }
\newcommand{\ENDNTH}{\end{newthm}}
\newcommand{\REFPROP}[1]{\begin{proposition}\mylabel{#1} }
\newcommand{\ENDPROP}{\end{proposition} }
\newcommand{\REFLEM}[1]{\begin{lemma}\mylabel{#1} }
\newcommand{\ENDLEM}{\end{lemma} }
\newcommand{\REFCOR}[1]{\begin{corollary}\mylabel{#1} }
\newcommand{\ENDCOR}{\end{corollary} }
\def\ov{\overline}
\def\T{{\mathbb T}}
\author{
Yan Gao\footnote{supported by the grant no. 11501383 of NSFC.}
\and
Jinsong Zeng\footnote{the corresponding author.}
}
\date{\today}
\begin{document}

\title{Non-recurrent parameter rays of the Mandelbrot set\footnote{\emph{Keywords: }Mandelbrot sets; non-recurrent. \ \ \emph{AMS[2010]Classification:} 37F45 }}

\maketitle
\begin{abstract}
In this paper, we prove that any parameter ray at a non-recurrent angle $\theta$ lands at a non-recurrent parameter $c$ with $\theta$ a characteristic angle of $f_c$; and conversely, every non-recurrent parameter $c$ is the landing point of one or two parameter rays at non-recurrent angles, and these angles are exactly the characteristic angles of $f_c$.

\end{abstract}

\section{Introduction}
  The quadratic family $\{f_c:z\mapsto z^2+c\}$ exhibits rich dynamics, when iterated.  The Mandelbrot set
\[\MMM:=\{c\in\C\mid f_c^n(c)\not\to\infty\text{ as }n\to\infty\}\]
organizes the space of quadratic polynomials up to conjugacy and has a beautiful structure.
It has been a very active area of research in the past few decades.
The importance of the Mandelbrot set is due to the fact that it is the simplest non-trival parameter space of analytic families of iterated holomorphic maps, and because of its universality as explained in \cite{DH1, Mc}

Much of the topological and combinatorial structures of the Mandelbrot set has been discovered by the work of Douady and Hubbard \cite{DH2}. A fundamental result in \cite{DH2} is to describe the landing behavior of the rational parameter rays.  One can see also \cite{Mil2,S,PR} for alternative approaches.

In this article, we study the landing property of irrational, precisely the \emph{non-recurrent}, parameter rays.

Set $\T:=\R/\Z$ and $\tau:\T\to\T$ the angle doubling map, i.e., $\tau(\theta)=2\theta\ {\rm mod}\Z,\theta\in\T$.  By abuse of notations, we  identify $\T$ with the unit circle under the correspondence $t\mapsto e^{2\pi it}$.

An angle $\theta\in\T$ is called \emph{non-recurrent} if  $\tau^n(\theta)\not=\tau^m(\theta)$ for any $0\leq m<n$, and there exists  $\delta>0$ such that $|\theta-\tau^n(\theta)|>\delta$ for all $n\geq 1$. A quadratic polynomial $f_c$, or the parameter $c$, is called \emph{non-recurrent} if $c\in \MMM$, all periodic points of $f_c$ are repelling and $|f^n_c(0)|>\delta$ for all $n\geq1$ and  a positive constance $\delta$.

It is known that the Julia sets of non-recurrent quadratic polynomials are connected and locally-connected. The angle of an external ray landing at the critical value $c$ is said to be a \emph{characteristic angle} of $f_c$. Refer to Section \ref{preliminary} for the definitions of  external rays and parameter rays. The following is our main theorem.

\begin{theorem}\label{main}
Any parameter ray at a non-recurrent angle lands at a non-recurrent parameter $c$ such that $\theta$ is a characteristic angle of $f_c$. Conversely, every non-recurrent parameter $c$ is the landing point of one or two parameter rays at non-recurrent angles, and these angles are exactly the characteristic angles of $f_c$.
\end{theorem}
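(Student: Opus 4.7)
The plan is to prove the two directions in tandem via the usual dynamics-to-parameter-space dictionary for the quadratic family, with the non-recurrence hypothesis providing just enough weak hyperbolicity for puzzle-piece shrinking arguments to survive the passage to the limit.

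For the first implication, let $\theta$ be non-recurrent. Since $\theta$ is neither periodic nor preperiodic, $\RRR_\theta$ cannot land at a parabolic or Misiurewicz parameter, so its impression is a non-empty continuum in $\partial\MMM$. Fix any $c_0$ in this impression and a sequence $c_n\in\RRR_\theta$ with $c_n\to c_0$. I would establish two facts: (i) the dynamical ray $R^{c_0}_\theta$ of $f_{c_0}$ lands at the critical value, so $\theta$ is characteristic for $f_{c_0}$, and (ii) $c_0$ is non-recurrent. For (i), each $c_n$ lies outside $\MMM$ and $R^{c_n}_\theta$ terminates at $c_n$. Using the uniform gap $|\tau^k(\theta)-\theta|\geq\delta$ I would build, uniformly in $n$, nested Yoccoz-type puzzle pieces $P^{c_n}_k\ni c_n$ whose intersection reduces to $c_n$, then take the Hausdorff limit to produce nested puzzle pieces for $f_{c_0}$ whose intersection is the landing point of $R^{c_0}_\theta$, necessarily equal to $c_0$. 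For (ii), the landing of $R^{c_0}_\theta$ at $c_0=f_{c_0}(0)$ forces $R^{c_0}_{\tau^k(\theta)}$ to land at $f^k_{c_0}(c_0)$, and local connectivity of $J(f_{c_0})$ turns the uniform separation of $\tau^k(\theta)$ from $\theta$ into a uniform separation of the forward orbit of $c_0$ from $c_0$. Since $f^{k+1}_{c_0}(0)-c_0=(f^k_{c_0}(0))^2$, this is equivalent to $|f^n_{c_0}(0)|\geq\delta'$ for all $n\geq 1$. Combined with the absence of attracting or neutral cycles (which follows from $c_0\in\partial\MMM$ together with the lower bound just derived), this is exactly the non-recurrence of $c_0$.

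For the converse, let $c$ be non-recurrent. Since $J(f_c)$ is locally connected, the critical value is the landing point of finitely many dynamical rays, giving one or two characteristic angles $\theta_1$ (and possibly $\theta_2$). The same translation used in (ii), run in reverse, converts the quantitative bound $|f^n_c(0)|>\delta$ into a uniform gap $|\tau^k(\theta_i)-\theta_i|\geq\delta''$, so each $\theta_i$ is non-recurrent. By the first direction, $\RRR_{\theta_i}$ accumulates on some non-recurrent parameter $c_i'$ for which $\theta_i$ is characteristic. To conclude $c_i'=c$, I would invoke a combinatorial rigidity argument in the non-recurrent setting: the characteristic angles determine the Yoccoz tableau of $f_c$, and two non-recurrent parameters with the same tableau coincide, by pulling back the critical tableau along the postcritical orbit and using the definite expansion coming from the bound $|f^n(0)|>\delta$.

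The main obstacle is the uniform shrinking step in (i). The non-recurrence of $\theta$ yields a definite gap on $\T$, but converting this into a geometric, $c_n$-uniform modulus estimate on dynamical puzzle pieces near the tip of $R^{c_n}_\theta$ requires a careful argument that does not degenerate as $c_n$ approaches $\partial\MMM$; this is where the bulk of the technical work lies. A secondary difficulty is the rigidity step in the converse: one must exclude the possibility that two distinct non-recurrent parameters share the same characteristic angles, which I expect to handle by a pullback argument along the critical orbit exploiting the quantitative bound $|f^n(0)|>\delta$ to produce a telescoping sequence of univalent pullbacks with controlled distortion.
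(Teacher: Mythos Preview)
Your proposal has two genuine gaps, both in the forward direction.

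In step~(ii) you write that ``local connectivity of $J(f_{c_0})$ turns the uniform separation of $\tau^k(\theta)$ from $\theta$ into a uniform separation of the forward orbit of $c_0$ from $c_0$''. This is circular: local connectivity of $J(f_{c_0})$ for a parameter on $\partial\MMM$ is exactly what needs to be established, and the standard route to it (Yoccoz, etc.) goes \emph{through} non-recurrence of the critical point. Even granting local connectivity, the implication runs the wrong way: continuity of the Carath\'eodory map $\gamma_{c_0}:\T\to J(f_{c_0})$ says close angles give close landing points, not that separated angles give separated landing points. To conclude that $f_{c_0}^k(c_0)\not\to c_0$ you would need the converse---that accumulation of orbit points on $c_0$ forces accumulation of their angles on an angle landing at $c_0$---and that is precisely a fiber/impression statement, not a continuity statement. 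The same issue infects your exclusion of neutral cycles: $c_0\in\partial\MMM$ does not by itself rule out parabolic, Siegel or Cremer behaviour.

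The paper sidesteps both problems by working with impressions and Kiwi's real lamination $\lambda(c)$ rather than with landing points. Kiwi's Combinatorial Continuity Theorem gives, for any $c_0$ in the impression of $R_\MMM(\theta)$, that all cycles of $f_{c_0}$ are repelling and $\lambda(c_0)=\lambda(\theta)$, with no local connectivity assumed. One then proves purely combinatorially that the characteristic $\lambda(\theta)$-class $A_\theta$ is wandering, has at most two elements (Thurston's No Wandering Polygon), and is non-recurrent in $\T$; this last step is the real content and uses a minimum-arc-length argument on the orbit of $A_\theta$. The non-recurrence of $A_\theta$ is then transferred to the critical orbit via the upper semicontinuity of impressions (if $f_{c_0}^{n_k}(c_0)\to c_0$ one extracts angles $t_{n_k}\in\tau^{n_k}(A_\theta)$ accumulating on $A_\theta$), and only \emph{after} $f_{c_0}$ is shown non-recurrent does one invoke local connectivity to collapse $\mathrm{Imp}_{c_0}(A_\theta)$ to $\{c_0\}$. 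Uniqueness of $c_0$ then follows from Yoccoz rigidity applied to two non-recurrent parameters with the same lamination.

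Your step~(i)---building uniform-in-$n$ puzzle pieces for $c_n\notin\MMM$ and passing to a Hausdorff limit---is not obviously wrong, but you correctly flag it as the main technical burden, and the paper avoids it entirely by the black box above. Your converse direction is essentially the paper's argument (first direction plus rigidity), except that the ``at most two characteristic angles'' claim also needs justification; the paper gets it from the No Wandering Polygon Theorem via the lamination description.
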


 Our proof is based on Kiwi's Combinatorial Continuity Theorem \cite[Theorem 1]{Ki2} and Yoccoz Rigidity Theorem \cite[Theorem III]{H} (or \cite[Theorem 4.1]{Ze}). We will review some background of polynomial dynamics and fix  notations in Section \ref{preliminary}, and introduce the concept of \emph{real lamination} in Section \ref{p-l}. In Section \ref{key-lemma}, we verify two combinatorial results used in the proof of the main theorem, and the proof of Theorem \ref{main} is left in Section \ref{proving}.

\section{Polynomial dynamics}\label{preliminary}
One can refer to \cite{DH2} for the details of the content in this section.

Let $f_c(z)=z^2+c,z\in\C$ be a quadratic polynomial. The set of all points
which remain bounded under all iterations of $f_c$ is called the \emph{Filled-in Julia set} $K_c$. The boundary of the Filled-in Julia set is defined to be the \emph{Julia set} $J_c$ and the complement of the Julia set is defined to be its \emph{Fatou set} $F_c$.

If $c\in \MMM$, the filled-in Julia set $K_c$ is \emph{simply-connected}, i.e., $\C\setminus K_c$ is connected. There is then a unique biholomorphic map $\phi_c$ from $\C\setminus K_c$ onto $\C\setminus\ov{\D}$, called the \emph{B\"{o}ettcher coordinate}, such that $\lim_{z\to\infty}\phi_c(z)/z=1$ and $\phi_c\circ f_c(z)=\phi_c(z)^2$ for $z\in\C\setminus K_f$.
 The preimage of $(1,\infty)e^{2\pi it}$ under $\phi_c$, denoted $R_c(t)$, is called  the {\it external ray at angle} $t$. We say that the external ray $R_c(t)$ \emph{lands} if $\ov{R_c(t)}\cap K_c$ is a singleton, and this point, denoted by $\g_c(t)$, is called the \emph{landing point} of $R_c(t)$.

In the parameter plane, the Mandelbrot set $\MMM$ is simply-connected, and there is a biholomorphic map $\Phi$ from $\C\setminus \MMM$ onto $\C\setminus\ov{\D}$. The \emph{parameter ray at angle $\theta$} is defined as the set $R_\MMM(\theta):=\{\Phi^{-1}(re^{2\pi i\theta}),r>1\}$. Similarly, if $\ov{R_\MMM(\theta)}\cap \MMM$ is a singleton, we say that $R_\MMM(\theta)$ \emph{lands}.

\section{The impression and real lamination of polynomials}\label{p-l}

Let $f_c$ be a quadratic polynomial with connected Julia set.
If $J_c$ is locally connected, the map $\phi_c^{-1}$ can be continuously extended to $\C\setminus\D$ and each external ray $R_c(t)$ lands at a  Julia point. The map $\g_c:\T\to J_f$, $t\mapsto \g_c(t)$, is continuous and surjective. In this case, the landing pattern of external rays for $f_c$ induces an equivalence relation $\lambda(c)$ on $\T$ such that $t\overset{\lambda(c)}{\thicksim} s$ if and only if $\g_c(t)=\g_c(s)$.

Kiwi \cite{Ki2} generalized the definition of such an equivalence relation to a class of non locally-connected  case, with the concept {\it impression} instead of the landing point in the locally-connected case.

We still assume that $f_c$ has the connected Julia set. Consider an argument $t\in \T$. We say that $z\in J_f$ belongs to the {\it impression} of $t$, written ${\rm Imp}_c(t)$, if and only if there exists a sequence  $\{z_n\in R_c(t_n)\}$ converging to $z$, with $\{t_n\}\subset \T$ converging to $t$. Note that $f_c(\text{Imp}_c(t))=\text{Imp}_c(\tau(t))$, and  ${\rm Imp}_c(t)=\g_c(t)$ for each $t\in \T$ if $J_c$ is locally connected.

Similar to the locally-connected case, we have the following two facts about the impressions, which will be used in the proof of Proposition \ref{part1}

\begin{lemma}\label{impression}
\begin{enumerate}
\item If there exists a sequence $\{z_n\in {\rm Imp}_c(t_n)\}$ with $z_n\to z$ and $t_n\to t$ as $n\to\infty$, then $z\in {\rm Imp}_c(t)$.
\item For any $z\in J_c$, there exists an argument $t\in \T$ with $z\in{\rm Imp}_c(t)$.
\end{enumerate}
\end{lemma}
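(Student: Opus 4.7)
The plan is to prove the two parts separately, with part (1) being a diagonal-sequence argument and part (2) relying on density of the basin of infinity together with compactness of $\T$.

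For part (1), I would unfold the definition of impression. Since $z_n\in {\rm Imp}_c(t_n)$, for each $n$ there is a sequence $\{w_{n,k}\in R_c(s_{n,k})\}_{k\geq1}$ with $w_{n,k}\to z_n$ and $s_{n,k}\to t_n$ as $k\to\infty$. For each $n$, pick $k=k(n)$ large enough so that
\[
|w_{n,k(n)}-z_n|<\tfrac1n \quad\text{and}\quad |s_{n,k(n)}-t_n|<\tfrac1n.
\]
Setting $w_n:=w_{n,k(n)}\in R_c(s_{n,k(n)})$ and using the hypotheses $z_n\to z$ and $t_n\to t$, the triangle inequality yields $w_n\to z$ and $s_{n,k(n)}\to t$. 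By the definition of impression, $z\in {\rm Imp}_c(t)$. The only thing to be careful about is that the inner sequences are indexed by $k$ while the outer one is indexed by $n$, which is exactly what the diagonal choice handles.

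For part (2), I would use that $\phi_c$ is a biholomorphism from $\C\setminus K_c$ onto $\C\setminus\ov{\D}$, as recalled in Section \ref{preliminary}. Since $z\in J_c=\partial K_c$, there is a sequence $\{z_n\}\subset\C\setminus K_c$ with $z_n\to z$. Write $\phi_c(z_n)=r_n e^{2\pi i t_n}$ with $r_n>1$ and $t_n\in\T$; then $z_n\in R_c(t_n)$. Because $\phi_c^{-1}$ sends the unit circle to $\infty$ (equivalently, $z_n\to J_c$ forces $|\phi_c(z_n)|\to1$), we have $r_n\to1$. By compactness of $\T$, extract a subsequence so that $t_n\to t$ for some $t\in\T$. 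Applying the definition of impression to this subsequence gives $z\in {\rm Imp}_c(t)$.

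Neither part presents a serious obstacle, since the argument is essentially bookkeeping with the definition of impression; the mild subtlety in part (1) is the diagonal selection, and in part (2) is the observation that every point of $\C\setminus K_c$ lies on exactly one external ray via the global Böttcher coordinate, so that approximating $z\in J_c$ from outside $K_c$ automatically provides nearby external rays whose arguments subconverge.
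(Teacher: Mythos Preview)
Your proof is correct and follows essentially the same route as the paper: a diagonal selection for part (1) and approximation from the basin of infinity plus compactness of $\T$ for part (2). One cosmetic remark: in part (2) the sentence ``$\phi_c^{-1}$ sends the unit circle to $\infty$'' is not right (points near $\partial\D$ are sent near $J_c$, while $\infty$ corresponds to $\infty$), and in any case the conclusion $r_n\to 1$ is not needed, since the definition of impression requires only $z_n\in R_c(t_n)$ with $z_n\to z$ and $t_n\to t$; you may simply delete that clause.
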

\begin{proof}
1. For each $n$, since $z_n\in\text{Imp}_c(t_n)$, we can choose an argument $s_n$ and a point $w_n\in R_c(s_n)$ such that $|z_n-w_n|<|z_n-z|$ and $|t_n-s_n|<|t_n-t|$. Then we have $|w_n-z|<2|z_n-z|\to 0$ and $|s_n-t|<2|t_n-t|\to 0$ as $n\to \infty$. It means that $w_n\in R_c(s_n)$ converges to $z$ and $s_n$ converges to $t$, hence $z\in \text{Imp}_c(t)$.

\noindent 2. Let $z\in J_c$. Since $J_c$ is the boundary of the basin $\C\setminus K_c$, there exists a sequence $\{z_n\}\subset \C\setminus K_c$ with $z_n$ converging to $z$. Each $z_n$ belongs to an external ray of argument $t_n$. By picking a subsequence, we  assume $t_n\to t$ as $n\to \infty$. It follows from the definition that $z\in\text{Imp}_c(t)$.
\end{proof}

Let $f_c$ be a quadratic polynomial with connected Julia set and without irrational neutral cycles. Following Kiwi (see \cite[Definition 2.2]{Ki2}),
 the {\it real lamination} of $f_c$ is the smallest equivalence relation $\lambda(c)$ in $\T$ which identities $s$ and $t$ whenever ${\rm Imp}_c(s)\cap {\rm Imp}_c(t)\not=\emptyset$.  For a $\lambda(c)$-class $A$, we denote ${\rm Imp}_c(A)$ the union of the impressions ${\rm Imp}_c(t)$ for all $t\in A$.

\section{The equivalence relation generated by angles}\label{key-lemma}

For any angle $\theta\in\T$, its preimages $\{\theta/2,(\theta+1)/2\}$ under $\tau$ divide $\T$ into two closed half circles, which are denoted by $L_0,L_1$ respectively. Then we can endow each angle $t\in\T$ two itineraries $\iota^{\pm}_\theta(t)$ with respect to $\theta$ such that $\iota^{\pm}_\theta(t)= i_0i_1\ldots$ if, for each $n\geq0$, there exists $\epsilon>0$ with $(\tau^n(t),\tau^n(t)\pm\epsilon)\subset L_{i_n}$.

From the definition, we can see that if $t$ is not an iterated preimage of $\theta$, then $\iota_\theta^+(t)=\iota^-_\theta(t)$. In particular, we have $\iota^+_\theta(\theta)=\iota^-_\theta(\theta)$ for any non-periodic $\theta$. In this case, the sequence $\iota^+_\theta(\theta)=\iota^-_\theta(\theta)$ is called the {\it kneading sequence of $\theta$}, written $\nu(\theta)$. It is called {\it aperiodic} if it is not a periodic
symbol sequence under the shift map.

By Kiwi \cite[Definition 4.5]{Ki2}, the {\it equivalence relation generated by} $\theta\in\T$, denoted by $\lambda(\theta)$, is defined as the smallest equivalence relation
such that if $\iota_\theta^+(t)=\iota_\theta^-(s)$, then $s$ and $t$ are equivalent.

From now on, we always assume that $\theta\in\T$ is non-recurrent. The following is a key Lemma in our proof.

\begin{lemma}\label{aperiodic}
If $\theta$ is non-recurrent, then $\nu(\theta)$ is aperiodic.
\end{lemma}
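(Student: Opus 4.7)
My plan is to argue by contrapositive. Suppose $\nu(\theta)$ is periodic of minimal period $p\ge 1$. By the definition of $\iota_\theta^+$ one immediately checks the semiconjugacy $\iota_\theta^+\circ\tau=\sigma\circ\iota_\theta^+$, so $\iota_\theta^+(\tau^p(\theta))=\sigma^p\nu(\theta)=\nu(\theta)$; hence $\theta$ and $\tau^p(\theta)$ share the same itinerary, and more generally $\iota_\theta^+(\tau^{jp}(\theta))=\nu(\theta)$ for every $j\ge 0$.

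The central geometric ingredient is a shrinking-cylinder estimate. Writing $(i_k)_{k\ge 0}=\nu(\theta)$, let $C_n$ denote the connected component of $\bigcap_{k=0}^{n-1}\tau^{-k}(L_{i_k})$ that contains $\theta$. Since $\tau|_{L_{i_0}^\circ}$ is a length-doubling homeomorphism onto $\T\setminus\{\theta\}$, one shows inductively that $\tau^{n-1}|_{C_n}$ is injective and multiplies arc-length by $2^{n-1}$; because its image lies in $L_{i_{n-1}}$, an arc of length $1/2$, this yields $|C_n|\le 2^{-n}$.

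I would then aim to show that $\tau^p(\theta)$ also lies in $C_n$ for every $n$; combined with $|C_n|\to 0$, this forces $\tau^p(\theta)=\theta$, contradicting the non-preperiodicity clause of non-recurrence ($\tau^n(\theta)\ne\tau^m(\theta)$ for $0\le m<n$).

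The main obstacle lies in that last step. The intersection $\bigcap_{k=0}^{n-1}\tau^{-k}(L_{i_k})$ is in general a disjoint union of several arcs all sharing the itinerary prefix $(i_0,\dots,i_{n-1})$, so matching only the first $n$ symbols is not enough to locate two angles in a common component; one has to verify that the full infinite itinerary actually pins down the component at every level, i.e.\ that $\iota_\theta^+$ is injective on the complement of the backward orbit of $\theta$. Should that rigidity prove delicate to extract from the Markov structure alone, condition (2) of non-recurrence supplies a quantitative alternative: any accumulation point $\theta^*$ of $\{\tau^{jp}(\theta)\}_{j\ge 1}$ lies at distance at least $\delta$ from $\theta$, carries the itinerary $\nu(\theta)$ by continuity of $\iota_\theta^+$ off the preimages of $\theta$, and the diameter bound $|C_n|\le 2^{-n}$ then forces $|\theta^*-\theta|\le 2^{-n}$ for all large $n$, the desired contradiction.
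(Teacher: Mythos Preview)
Your diagnosis of the obstacle is accurate, but your proposed ``quantitative alternative'' does not escape it. Knowing that an accumulation point $\theta^*$ carries the itinerary $\nu(\theta)$ only tells you $\theta^*\in D_n:=\bigcap_{k=0}^{n-1}\tau^{-k}(L_{i_k})$; it does not place $\theta^*$ in the particular component $C_n$ of $D_n$ that contains $\theta$. As you yourself observed two sentences earlier, $D_n$ is in general a disjoint union of several arcs, so the bound $|C_n|\le 2^{-n}$ gives no control on $|\theta^*-\theta|$ unless you already know $\theta^*\in C_n$. Thus the ``alternative'' is circular: it presupposes exactly the component-rigidity you were trying to avoid proving. (Injectivity of $\iota_\theta^+$ off the grand orbit of $\theta$ is in fact true, but it requires a genuine argument; neither expansivity with constant $<\tfrac12$ nor continuity of the itinerary map suffices by itself.)

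The paper sidesteps this difficulty entirely. Rather than working with nested cylinder components, it sets $B_k=\{\tau^{k+np}(\theta):n\ge 0\}$ for $0\le k\le p$ and notes that each $B_k$ lies in a single half-circle $L_{i_k}$ (common first itinerary symbol), while the non-recurrence hypothesis keeps each closure $\overline{B_k}$ away from the partition points $\{\theta/2,(\theta+1)/2\}$. Hence $\tau:\overline{B_k}\to\overline{B_{k+1}}$ is injective, so $\tau^p:\overline{B_0}\to\overline{B_0}$ is an injective self-map of a compact set by an expanding map; Milnor's Lemma~18.8 then forces $\overline{B_0}$ to be finite, contradicting the non-preperiodicity of $\theta$. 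This argument uses only one level of the Markov partition together with a dynamical black box, rather than the full symbolic coding your approach relies on.
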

\begin{proof}
On the contrary, we assume that $\nu(\theta)$ is periodic of period $p\geq1$. For each integer $0\leq k\leq p$, set
\[B_k:=\{\tau^{k+np}(\theta)\mid n\geq0\}.\]
Then we have $\tau(\ov{B_k})\subset\ov{B_{k+1}},k\in\{0,\ldots,p-1\},$ and $\ov{B_p}\subset \ov{B_0}$. Note that all elements of $B_k$ have a common itinerary, then each $B_k$ is contained in a component of $\T\setminus\{\theta/2,(\theta+1)/2\}$. Moreover, by the non-recurrent property, the closures $\ov{B_k}$ are disjoint from $\{\theta/2, (\theta+1)/2\}$. It follows that for each $k\in\{0,\ldots,p-1\}$, the map $\tau:\ov{B_k}\to \ov{B_{k+1}}$ is injective, and hence $\tau^p:\ov{B_0}\to \ov{B_0}$ is injective. According to \cite[Lemma 18.8]{Mi1}, the set $\ov{B_0}$ is finite, a contradiction.
\end{proof}

Since $\nu(\theta)$ is aperiodic, by \cite[Proposition 4.7]{Ki2}, we get that
\begin{proposition}\label{property}
 The equivalence relation $\lambda(\theta)$ is closed and satisfies that
\begin{enumerate}
\item  each $\lambda(\theta)$-class  is a finite subset of $\T$;
\item  if $A$ is a $\lambda(\theta)$-class, then $\tau(A)$ is a $\lambda(\theta)$-class;
\item  for any two different $\lambda(\theta)$-classes $A,B$, the convex hulls of $A$ and $B$ are disjoint.
\end{enumerate}
\end{proposition}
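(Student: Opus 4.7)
The statement is precisely Kiwi's \cite[Proposition 4.7]{Ki2} applied to our non-recurrent $\theta$, and since Lemma \ref{aperiodic} supplies its only hypothesis (aperiodicity of $\nu(\theta)$), the cleanest route is simply to invoke that proposition. For a self-contained outline, the plan is to derive all four conclusions from the interaction between the shift on itineraries and the action of $\tau$ on $\T$, using aperiodicity of $\nu(\theta)$ to exclude pathological collisions of itineraries.

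First I would establish closedness of $\lambda(\theta)$: if $t_n \to t$ and $s_n \to s$ with $t_n \sim_{\lambda(\theta)} s_n$, one extracts finite chains of identifications of the form $\iota_\theta^+(a) = \iota_\theta^-(b)$ connecting $t_n$ to $s_n$, then passes to the limit using the one-sided continuity of $\iota_\theta^\pm$ at angles that are not iterated preimages of $\theta$. The $\tau$-invariance (2) is then immediate from the shift: if $\iota_\theta^+(t) = \iota_\theta^-(s)$, shifting both sequences gives $\iota_\theta^+(\tau(t)) = \iota_\theta^-(\tau(s))$, so identifications propagate forward under $\tau$; surjectivity of $\tau$ onto a single class uses that $\tau$ has degree two on $\T$.

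The substantive content lies in (1) and (3). For finiteness, I would argue by contradiction: an infinite class $A$ would accumulate at some $t^\ast \in \T$ by compactness, and chasing the chain of itinerary identifications among nearby members of $A$ would produce either a periodic itinerary, contradicting the aperiodicity of $\nu(\theta)$ supplied by Lemma \ref{aperiodic}, or a forward orbit of $\theta$ that re-approaches $\theta$ arbitrarily closely, contradicting the non-recurrence hypothesis. For the unlinked property (3), if two distinct classes $A$ and $B$ had crossing convex hulls, then there would exist cyclically interleaved angles $a_1, b_1, a_2, b_2$ with $a_i \in A$ and $b_j \in B$; iterating $\tau$ forces some image of one of the arcs determined by these four points to cross a preimage of $\theta$, which produces a coincidence of itineraries between a point of $A$ and a point of $B$ and hence $A = B$, a contradiction.

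The principal obstacle is (1): the other three items reduce to symbolic bookkeeping, but ruling out infinite classes genuinely requires aperiodicity of $\nu(\theta)$ together with careful tracking of how itineraries separate orbits near an accumulation point. This is the step where the non-recurrence hypothesis on $\theta$ (through Lemma \ref{aperiodic}) does real work, and I would expect to spend most of the effort there.
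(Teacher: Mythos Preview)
Your proposal is correct and matches the paper's approach exactly: the paper does not prove this proposition independently but simply invokes \cite[Proposition~4.7]{Ki2}, with Lemma~\ref{aperiodic} supplying the aperiodicity hypothesis. Your additional self-contained outline goes beyond what the paper provides, but your primary route---cite Kiwi---is precisely what the authors do.
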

Combining the fact that $\theta$ is non-recurrent, we can obtain more information about $\lambda(\theta)$. Since $\theta$ is not periodic, then $\iota^+_\theta(\theta/2)=\iota^-_\theta((\theta+1)/2)$, and hence $\theta/2$ and $(\theta+1)/2$ are $\lambda(\theta)$-equivalent. We call the $\lambda(\theta)$-class containing $\{\theta/2,(\theta+1)/2\}$ the {\it critical class}, denoted by $C_\theta$; and the one containing $\theta$ the {\it characteristic class}, denoted by $A_\theta$.

\begin{lemma}\label{equivalence} Let $A_\theta$ be the characteristic class of $\lambda(\theta)$. Then we have
\begin{enumerate}
\item $A_\theta$ is \emph{wandering}, i.e., $\tau^n(A_\theta)\cap \tau^m(A_\theta)=\emptyset$ for each $0\leq m<n$;
\item  $A_\theta$ contains at most two angles;
\item  $A_\theta$ is non-recurrent, i.e,  $\exists~\delta>0$ s.t $dist(A_\theta,\tau^n(A_\theta))>\delta$ for all $n\geq 1$.
\end{enumerate}
\end{lemma}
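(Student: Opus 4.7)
My plan is to prove the three items in order, leveraging the closed-equivalence structure of $\lambda(\theta)$ from Proposition \ref{property} together with the non-recurrence of $\theta$.

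For (1), I suppose towards contradiction that $\tau^n(A_\theta) = \tau^m(A_\theta)$ for some $n > m \geq 0$. By Proposition \ref{property}(2), $\tau^{n-m}$ maps the class $\tau^m(A_\theta)$ onto itself; since that class is finite by Proposition \ref{property}(1), the restriction is a permutation. Consequently $\tau^m(\theta)\in\tau^m(A_\theta)$ is $\tau^{n-m}$-periodic, so $\theta$ is preperiodic, contradicting the aperiodicity clause in the definition of a non-recurrent angle.

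For (2), I first observe that no iterate $\tau^n(A_\theta)$ can equal the critical class $C_\theta$: if it did, then $\tau^{n+1}(A_\theta)=\tau(C_\theta)$ would be the class of $\tau(\theta/2)=\theta$, namely $A_\theta$, contradicting (1). Since $C_\theta$ is the unique $\lambda(\theta)$-class that can contain an antipodal pair $\{t, t+1/2\}$, the map $\tau$ is then injective on every $\tau^n(A_\theta)$, so the cardinality of $A_\theta$ is preserved under iteration. Combining this with (1) and invoking the quadratic no-wandering-triangle theorem of Thurston (a version of which applies to the real lamination $\lambda(\theta)$; cf.\ \cite{Ki2}) yields $|A_\theta|\leq 2$.

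For (3), I argue by contradiction: suppose $dist(A_\theta,\tau^{n_k}(A_\theta))\to 0$ along some $n_k\to\infty$. Finiteness of $A_\theta$ lets me pass to a subsequence on which $\tau^{n_k}(\theta)\to a$ and, when $A_\theta=\{\theta,\theta'\}$, also $\tau^{n_k}(\theta')\to a'$ for some fixed $a, a'\in\T$; closedness of $\lambda(\theta)$ forces $\{a, a'\}\subseteq A_\theta$. The case $a=\theta$ contradicts the non-recurrence of $\theta$ directly. When $a=\theta'$ and $a'=\theta$, I compose $\tau^{n_l}$ with the convergence for suitable $l$ and use Proposition \ref{property}(3) together with the injectivity from (2) to produce a diagonal sub-subsequence $N_j$ along which $\tau^{N_j}(\theta)\to\theta$, once again contradicting non-recurrence. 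The main obstacle I expect is the subcase $a=a'=\theta'$, where the chord $\tau^{n_k}(A_\theta)$ collapses onto the single endpoint $\theta'$; since $\theta'$ is not hypothesized to be non-recurrent, the cleanest route is to pull back along suitable branches of $\tau^{-1}$ and use that any accumulation point of $\{\tau^n(\theta)\}$ arising as a preimage of $\theta'$ lies, after one further iteration, arbitrarily close to $\theta$ itself, violating the $\delta$-separation in the definition of a non-recurrent angle.
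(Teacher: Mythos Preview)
Your arguments for (1) and (2) match the paper's and are correct; your diagonal/composition idea for the subcase $(a,a')=(\theta',\theta)$ in (3) is also essentially the paper's argument.

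The genuine gap is the collapsing subcase $a=a'=\theta'$. Your proposed fix does not work: if $\tau^{n_k}(\theta)\to\theta'$ and $\tau^{n_k}(\theta')\to\theta'$, pulling back one step only tells you that $\tau^{n_k-1}(\theta)$ accumulates on $\tau^{-1}(\theta')\subset C_\theta$, and one further iteration lands you back near $\theta'$, not near $\theta$. Nothing in the hypotheses prevents $\theta'$ from being recurrent, so no contradiction with the $\delta$-separation of $\theta$ emerges from this route.

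The paper eliminates the collapsing subcase altogether by a chord-length argument. Writing $A_\theta=\{\theta,\eta\}$ (your $\theta'$ is the paper's $\eta$), let $S_1^+$ be the shorter of the two arcs into which $A_\theta$ cuts $\T$, and set $S_n^{\pm}:=\sigma^{n-1}(S_1^{\pm})$, where $\sigma$ is the map on arcs induced by $\tau$. The key claim is that $S_1^+$ has the shortest length among all $S_n^{\pm}$, $n\ge 1$. For if some $|S_k^+|<|S_1^+|$ with $k$ minimal, then $|S_{k-1}^+|>1/2$; by unlinking (Proposition~\ref{property}(3)) the arc $S_{k-1}^+$ must then contain the critical class $C_\theta=\{\theta/2,(\theta+1)/2,\eta/2,(\eta+1)/2\}$, hence at least three of the four arcs into which $C_\theta$ cuts $\T$, and therefore its image $S_k^+=\sigma(S_{k-1}^+)$ contains either $S_1^+$ or $S_1^-$, contradicting $|S_k^+|<|S_1^+|$. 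This claim gives a uniform positive lower bound $|S_1^+|$ on the distance between the two points of every $\tau^n(A_\theta)$, so any subsequential limits of $\tau^{n_k}(\theta)$ and $\tau^{n_k}(\eta)$ are distinct; combined with closedness of $\lambda(\theta)$ this forces $\{\theta',\eta'\}=\{\theta,\eta\}$ and reduces (3) to the two subcases you already handled. This arc-length step is the missing ingredient in your proposal.
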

\begin{proof}
{\it 1.} On the contrary, without loss of generality, we assume that $A_\theta$ is periodic. Then the fact of $\#A_\theta<\infty$ implies that $\theta$ is eventually periodic, a contradiction.

{\noindent \it 2.} Since $A_\theta$ is wandering, its orbit does not contain $C_\theta$. Note that each $\lambda(\theta)$-class except  $C_\theta$ is contained in one component of $\T\setminus \{\theta/2,(\theta+1)/2\}$ (by 3 of Proposition \ref{property}), it follows that $\#A_\theta=\#\tau^n(A_\theta)$ for all $n\geq0$. Using Thurston's No Wandering Polygon Theorem (\cite[Theorem II.5.2]{T}), the conclusion holds.

{\noindent \it 3.} If $A_\theta$ contains one angle, since $\theta$ is non-recurrent, the set $A_\theta$ is naturally non-recurrent. So, by assentation (2), we just need to prove the case that $\#A_\theta=2$.

For any $\alpha\not=\beta\in\T$, we define the arc $(\alpha,\beta)\subset \T$ as the closure of the connected component of $\T\setminus\{\alpha,\beta\}$ that consists of the angles we traverse if we move on $\T$ in the counterclockwise direction from $\alpha$ to $\beta$. The length of an arc $S\subset \T$ is denoted by $|S|$.
 We define a map $\sigma$ on all arcs in $\T$  such that $\sigma(\alpha,\beta)$ equals to $(\tau(\alpha),\tau(\beta))$ if $\tau(\alpha)\not=\tau(\beta)$, and equals to $\tau(\alpha)$ otherwise.
It is apparent that $|\sigma(S)|=2|S|$ if $|S|<1/2$ and $|\sigma(S)|=2|S|-1$ otherwise.

Let $A_\theta=\{\theta,\eta\}$.  Then it divides $\T$ into two closed arcs. We denote the shorter one by $S^+_1$ and the longer one by $S_1^-$. For $n\geq1$, set $$A_n:=\tau^{n-1}(A_\theta),\ S^+_n:=\sigma^{n-1}(S^+_1)\text{ and }S^-_n:=\sigma^{n-1}(S^-_1).$$
According to the proof of assentation 2, each $A_n$ is a $\lambda(\theta)$-class containing two angles. And it divides $\T$ into $S^+_n$ and $S^-_n$.
By 2 of Proposition \ref{property}, the critical class $C_\theta$ is equal to $\{\frac{\theta}{2},\frac{\theta+1}{2},\frac{\eta}{2},\frac{\eta+1}{2}\}$. It divides $\T$ into four arcs. We denote the two shorter ones by $S^+_0,-S^+_0$, and the two longer ones by $S^-_0,-S^-_0$. It is clear that $|S^+_0|=|-S^+_0|<|S^-_0|=|-S^-_0|<1/2$ and $\sigma(\pm S^\delta_0)=S^\delta_1$ for $\delta\in\{+,-\}$.

We claim that in the set $\{S^{\pm}_n\mid n\geq1\}$, the arc $S^+_1$ has the shortest length. If not, suppose that $k\geq2$ is the first integer such that $S^+_k$ or $S^-_k$, say $S^+_k$, has a shorter length than $S^+_1$. Then $|S_{k-1}^+|>1/2$. It follows from 3 of Proposition \ref{property} that the arc $S_{k-1}^+$ contains $C_\theta$, and hence contains three of the arcs $\pm S^+_0,\pm S^-_0$. Its image $\sigma(S_{k-1}^+)=S_k^+$ therefore contains either $\tau(S^+_0)=S^+_1$ or $\tau(S^-_0)=S^-_1$. It implies $|S_k^+|\geq\min\{|S_1^+|,|S_1^-|\}=|S_1^+|$, a contradiction to the assumption that $|S^+_k|<|S_1^+|$.

Set $\theta_n:=\tau^n(\theta)$ and $\eta_n:=\tau^n(\eta)$ for all $n\geq1$. We now start to prove point 3 by contradiction. We can assume that $\text{dist}(A_{n_k},A_\theta)\to 0$, $\theta_{n_k}\to\theta'$, and $\eta_{n_k}\to\eta'$ as $k\to\infty$ by passing to a subsequence if necessary. Since $\lambda(\theta)$ is closed, $\eta', \theta'$ are in the same $\lambda(\theta)$-class. Notice that $|S_{n_k}^{\pm}|\geq |S_1^+|$ as explained in the claim above. Then $\eta'\neq\theta'$. Since $\text{dist}(A_{n_k},A_\theta)\to 0$ and $\# A_\theta=2$, we have $\theta'=\eta$ and $\eta'=\theta$.

Hence for any $\epsilon>0$, there exist $k_1$ and $k_2$ such that $|\eta_{n_{k_1}}-\theta|<\epsilon/2$ and
$$2^{n_{k_1}}|\theta_{n_{k_2}}-\eta|=|\tau^{n_{k_1}}(\theta_{n_{k_2}})-\tau^{n_{k_1}}(\eta)|<\epsilon/2.$$
It follows that $|\tau^{n_{k_1}+n_{k_2}}(\theta)-\theta|<\epsilon$ for any $\epsilon>0$, which is impossible.
\end{proof}

\section{Proof of Theorem \ref{main}}\label{proving}

The proof of Theorem \ref{main} is based on the following two propositions.


\begin{proposition}\label{part1}
Given a non-recurrent angle $\theta$, the parameter ray $R_\MMM(\theta)$ lands at a non-recurrent parameter $c$ such that $f_c$ takes $\theta$ as a characteristic angle.
\end{proposition}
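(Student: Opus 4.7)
The plan is to combine Kiwi's Combinatorial Continuity Theorem with Yoccoz's Rigidity Theorem. Since $\theta$ is non-recurrent, Lemma \ref{aperiodic} guarantees that $\nu(\theta)$ is aperiodic, so Kiwi's theorem applies to the equivalence relation $\lambda(\theta)$, and Proposition \ref{property} together with Lemma \ref{equivalence} tell us that the characteristic class $A_\theta$ is wandering, contains at most two angles, and is non-recurrent.

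First, I would take an arbitrary accumulation point $c \in \partial\MMM$ of the parameter ray $R_\MMM(\theta)$. Applying Kiwi's Combinatorial Continuity Theorem, I expect to deduce that $f_c$ has no irrational neutral cycle, that its real lamination $\lambda(c)$ equals $\lambda(\theta)$, and that the critical value $c$ belongs to ${\rm Imp}_c(\theta)$. This identification already gives $c \in {\rm Imp}_c(A_\theta)$ and, inductively, $f_c^{n-1}(c) \in {\rm Imp}_c(\tau^{n-1}(A_\theta))$ for every $n \geq 1$.

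Next, I would verify that $c$ is non-recurrent by establishing a uniform lower bound $|f_c^n(0)| > \delta'$. I argue by contradiction: suppose $f_c^{n_k}(0) \to 0$ along a subsequence. After extracting, $\tau^{n_k-1}(\theta) \to t_\infty$ in $\T$, and Lemma \ref{impression}(1) yields $0 \in {\rm Imp}_c(t_\infty)$. Since the critical point $0$ lies in ${\rm Imp}_c(C_\theta)$ and the equality $\lambda(c) = \lambda(\theta)$ (via Kiwi) forces impressions of distinct $\lambda(\theta)$-classes to be pairwise disjoint, we must have $t_\infty \in C_\theta$. However, the orbit $\{\tau^n(A_\theta)\}_{n \geq 1}$ stays bounded away from $C_\theta$: if some $\tau^{n-1}(A_\theta)$ approached $C_\theta$, then by continuity of $\tau$ and the inclusion $\tau(C_\theta) \subseteq A_\theta$, the next iterate $\tau^n(A_\theta)$ would approach $A_\theta$, contradicting Lemma \ref{equivalence}(3). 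This contradicts $\tau^{n_k - 1}(\theta) \to t_\infty \in C_\theta$, so $c$ is non-recurrent.

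Finally, since $c$ is non-recurrent, its Julia set $J_c$ is locally connected, so impressions collapse to landing points; in particular $R_c(\theta)$ lands at $c$, which is exactly the statement that $\theta$ is a characteristic angle of $f_c$. That the entire accumulation set of $R_\MMM(\theta)$ is the single point $c$ then follows from Yoccoz's Rigidity Theorem: any two non-recurrent parameters with the same real lamination must coincide, and we have just shown that every accumulation point is non-recurrent with real lamination equal to $\lambda(\theta)$. The main obstacle in this plan is the transition from the purely combinatorial non-recurrence of $A_\theta$ in $\T$ to the dynamical non-recurrence of the critical orbit in $\C$; it relies both on establishing that the orbit of $A_\theta$ stays uniformly separated from the critical class $C_\theta$ (via Lemma \ref{equivalence}(3) combined with $\tau(C_\theta)\subseteq A_\theta$) and on exploiting the upper semi-continuity of impressions supplied by Lemma \ref{impression}(1) to transfer that separation from angle space to the dynamical plane.
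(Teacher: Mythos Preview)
Your proposal is correct and follows essentially the same route as the paper: Kiwi's theorem to pin down $\lambda(c)=\lambda(\theta)$ for every accumulation parameter, the upper semicontinuity of impressions (Lemma~\ref{impression}) together with the combinatorial non-recurrence of $A_\theta$ (Lemma~\ref{equivalence}(3)) to force dynamical non-recurrence, local connectivity to make $R_c(\theta)$ land at $c$, and Yoccoz rigidity to collapse the accumulation set to a point. The only differences are cosmetic: the paper runs the contradiction at the critical \emph{value} (assuming $c_{n_k}\to c$ and concluding directly that the limiting angle lies in $A_\theta$), whereas you work one step back at the critical \emph{point} and then push forward via $\tau(C_\theta)=A_\theta$; also, Kiwi's theorem in fact yields that \emph{all} cycles of $f_c$ are repelling (not merely the absence of irrationally neutral ones), which you need for the paper's definition of ``non-recurrent'', and the inclusion $0\in{\rm Imp}_c(C_\theta)$ that you invoke deserves a one-line justification---the paper obtains it from the symmetry $\phi_c(-z)=-\phi_c(z)$, which gives $0\in{\rm Imp}_c(t_0)\cap{\rm Imp}_c(t_0+\tfrac12)$ and hence $t_0\in C_\theta$.
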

\begin{proof}
We set ${\rm Acc}_\MMM(\theta)$ the accumulation set of $R_\MMM(\theta)$ on $\MMM$. By Lemma \ref{aperiodic} and \cite[Theorem 1]{Ki2}, we have that  all cycles of $f_c$ are repelling and $\lambda(c)=\lambda(\theta)$ for all $c\in {\rm Acc}_\MMM(\theta)$. Let $c\in {\rm Acc}_\MMM(\theta)$. We will show that $f_c$ is non-recurrent with a characteristic angle $\theta$.

By Lemma \ref{impression} (2), we choose an angle $t_0\in \T$ with $0\in {\rm Imp}_c(t_0)$. Note that the B\"{o}ettcher coordinate $\phi_c$ satisfies that $\phi_c(-z)=-\phi_c(z)$ for $z\in \C\setminus K_c$, then the sets ${\rm Imp}_c(t_0)$ and ${\rm Imp}_c(t_0+1/2)$ are symmetric about the origin. It follows that $0\in {\rm Imp}_c(t_0)\cap {\rm Imp}_c(t_0+1/2)$, and hence $t_0,t_0+1/2$ are in a common $\lambda(\theta)$-class (because $\lambda(c)=\lambda(\theta)$). By Proposition \ref{property} (3), we know that $t_0,t_0+1/2$ are contained in the critical class $C_\theta$.
Then the impression of the characteristic class ${\rm Imp}_c(A_\theta)$ contains the critical value $c$.

Set $c_n:=f^{n-1}_c(c)$ and recall that $A_n:=\tau^{n-1}(A_\theta)$. Then $c_n$ belongs to ${\rm Imp}_c(A_n)$ for each $n\geq1$. The non-recurrent property of $f_c$ is equivalent to that the accumulation set of $\{c_n,n\geq1\}$ is disjoint from the critical value $c$. We continue the argument by contradiction and assume that the sequence $\{c_{n_k}\in \text{Imp}_c(t_{n_k})\}$ with $t_{n_k}\in A_{n_k}$ converges to $c$ as $n\to \infty$.

By passing to a subsequence if necessary, we  assume that $t_{n_k}\to t$ as $k\to\infty$. From Lemma \ref{impression}(1), we see that $c\in{\rm Imp}_c(t)$, and hence $t\in A_\theta$. This contradicts Lemma \ref{equivalence} (3). Thus $f_c$ is non-recurrent.
Since the Julia sets of non-recurrent quadratic polynomials are locally connected,   the set ${\rm Imp}_c(A_\theta)$ reduces to one point $c$. So $R_c(\theta)$ lands at $c$.

We have seen that all $f_c$ with $c\in {\rm Acc}_\MMM(\theta)$ are non-recurrent and have a common  real lamination $\lambda(\theta)$. Due to Yoccoz Rigidity Theorem \cite[Theorem III]{H} or  \cite[Theorem 4.1]{Ze}, we have that ${\rm Acc}_\MMM(\theta)$ reduces to one point. So the parameter ray $R_\MMM(\theta)$ lands.
\end{proof}

\begin{proposition}\label{part2}
Let $f_c$ be a non-recurrent quadratic polynomial. Then it has at most two characteristic angles, and the parameter rays at these angles land at $c$.
\end{proposition}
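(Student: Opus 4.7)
The plan is to reduce Proposition \ref{part2} to Proposition \ref{part1} via Yoccoz Rigidity. Since $f_c$ is non-recurrent, the Julia set $J_c$ is locally connected, every external ray lands, and the Carath\'eodory loop $\gamma_c : \T \to J_c$ is a continuous surjection; the characteristic angles of $f_c$ form the fibre $\gamma_c^{-1}(c)$. The Misiurewicz subcase, where $c$ is strictly preperiodic and all characteristic angles are rational, is covered by the classical Douady--Hubbard landing theorem for preperiodic parameter rays, so I focus the main argument on the non-Misiurewicz subcase.

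The crucial step is to prove that every characteristic angle $\theta$ of $f_c$ is non-recurrent in the angle sense of the paper. First I would rule out $\theta$ being periodic: if $\tau^p(\theta)=\theta$, then $\gamma_c$ gives $f_c^p(c)=c$, i.e.\ $c$ lies on a cycle of period dividing $p$. Writing $c=f_c(0)$ and using that $f_c^{-1}(c)=\{0\}$ is a doubled preimage, one derives $f_c^p(0)=0$, contradicting $|f_c^p(0)|>\delta$. Next, assume toward a contradiction that $\tau^{n_k}(\theta)\to\theta$ along some $n_k\to\infty$. Continuity of $\gamma_c$ yields
\[
  f_c^{n_k+1}(0)\;=\;\gamma_c\bigl(\tau^{n_k}(\theta)\bigr)\;\longrightarrow\;\gamma_c(\theta)\;=\;c.
\]
The local expansion $f_c(z)-c=z^2$ at the critical point $0$ then forces $f_c^{n_k}(0)\to 0$, again contradicting $|f_c^n(0)|>\delta$. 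Hence $\theta$ is a non-recurrent angle.

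With $\theta$ non-recurrent in hand, Proposition \ref{part1} applies: the parameter ray $R_\MMM(\theta)$ lands at a non-recurrent parameter $c'$ for which $\theta$ is characteristic, and Kiwi's Combinatorial Continuity Theorem gives $\lambda(c')=\lambda(\theta)$. The analogous combinatorial identification $\lambda(c)=\lambda(\theta)$ holds on the dynamical side for the original polynomial $f_c$, by Kiwi's description of real laminations in terms of kneading sequences applied to the characteristic angle. Thus $\lambda(c)=\lambda(c')$, and Yoccoz Rigidity (\cite[Theorem III]{H} or \cite[Theorem 4.1]{Ze}) applied to the two non-recurrent polynomials $f_c$ and $f_{c'}$ forces $c=c'$. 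Consequently $R_\MMM(\theta)$ lands at $c$.

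For the cardinality bound, the set of characteristic angles $\gamma_c^{-1}(c)$ is the $\lambda(c)$-class of $c$; via $\lambda(c)=\lambda(\theta)$ it coincides with the characteristic class $A_\theta$ of $\lambda(\theta)$, and Lemma \ref{equivalence}(2) gives $|A_\theta|\leq 2$. The main technical difficulty is the first step, where the non-recurrence of the critical orbit (a metric statement on the Julia set) must be transferred into the metric non-recurrence of $\theta$ on $\T$; this transfer relies essentially on the local quadratic behaviour of $f_c$ near $0$ together with the continuity of the Carath\'eodory loop $\gamma_c$.
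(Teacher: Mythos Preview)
Your proof is correct and follows essentially the same route as the paper: apply Proposition~\ref{part1} to a characteristic angle $\theta$, identify $\lambda(c)=\lambda(c')=\lambda(\theta)$ via Kiwi \cite[Proposition~4.10]{Ki2}, invoke Yoccoz Rigidity to conclude $c=c'$, and use Lemma~\ref{equivalence}(2) for the cardinality bound. The only substantive difference is that you spell out why a characteristic angle $\theta$ is non-recurrent (via continuity of $\gamma_c$ and the local form $f_c(z)-c=z^2$) and split off the Misiurewicz case, whereas the paper compresses this step into the single word ``Clearly''.
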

\begin{proof}
Let $f_c$ be a non-recurrent quadratic polynomial. Since $J_c$ is locally connected, there exists a characteristic angle $\theta$ with $R_c(\theta)$ landing at $c$. Clearly $\theta$ is non-recurrent. By Proposition \ref{part1}, the parameter ray $R_\MMM(\theta)$ lands at a non-recurrent parameter $c'$ so that $f_{c'}$ has a characteristic angle $\theta$. We then have that $\theta/2$ and $(\theta+1)/2$ are contained in both a $\lambda(c)$-class and a $\lambda(c')$-class. By \cite[Proposition 4.10]{Ki2}, we get $\lambda(c)=\lambda(c')=\lambda(\theta)$. Using again the Yoccoz Rigidity Theorem, it follows that $c=c'$. By Lemma \ref{equivalence} (2), the cardinality of the $\lambda(c)$-class that contains $\theta$ is at most two. Hence $f_c$ has at most two characteristic angles.
\end{proof}

\begin{proof}[Proof of Theorem \ref{main}]
It follows directly from Propositions \ref{part1} and \ref{part2}.
\end{proof}

\vspace{1cm}
\noindent Yan Gao, \\
Mathematical School  of Sichuan University, Chengdu 610064,
P. R. China. \\
Email: gyan@scu.edu.cn
\vspace{0.2cm}

\noindent Jinsong Zeng, \\
Academy of Mathematics and Systems Science, \\
Chinese
Academy of Sciences, Beijing 100190, P. R. China. \\
Email: zeng.jinsong@amss.ac.cn
\vspace{0.2cm}


\begin{thebibliography}{BKM}
\small{
\bibitem[DH1]{DH1} A. Douady and J. H. Hubbard, On the dynamics of polynomial-like mappings, \emph{Ann. Scient. Ec. Norm. Sup.}, 18(1985), 287--343.


\bibitem[DH2]{DH2} A. Douady and J. H. Hubbard, \emph{ Etude Dynamique Des Polynomes Complexes, I, II}, Orsay: Publ Math Orsay, 1984--1985.

\bibitem[Hu]{H} J. H. Hubbard, Local connectivity of Julia sets and bifurcation loci: three theorems of
J.-C. Yoccoz, in \emph{Topological methods in modern mathematics} (Stony Brook, NY, 1991),
Publish or Perish, Houston, TX, 1993, 467--511.


\bibitem[Ki1]{Ki1} J. Kiwi, Real laminations and the topological dynamics of complex polynomials, \emph{Adv. in
Math.}, 184(2004), 207--267.
\bibitem[Ki2]{Ki2} J. Kiwi, Combinatorial continuity in complex polynomial dynamics, \emph{Proc. London Math. Soc.}(3) 91 (2005) 215--248.

\bibitem[Mc]{Mc} C. McMullen, The Mandelbrot set is universal, \emph{The Mandelbrot set, theme and variations} (ed. Tan Lei), London Mathematical Society Lecture Note Series 274 (Cambridge University Press, 2000), 1--17.

\bibitem[Mil1]{Mi1} J. Milnor, \emph{ Dynamics in One Complex Variable}. Princeton University Press 2006.

\bibitem[Mil2]{Mil2} J. Milnor, {Periodic orbits, external rays and the mandelbrot set}, \emph{Ast\'{e}risque}, 261:277¨C-333, 2000.

\bibitem[PR]{PR} C.Petersen and G.Ryd, {Convergence of rational rays in parameter spaces}, The Mandelbrot set, \emph{theme and variations} (ed. Tan Lei), London Mathematical Society Lecture Note Series 274 (Cambridge University Press, 2000), 161--172.

\bibitem[Sch]{S}  D. Schleicher, {Rational parameter rays of the mandelbrot set}, \emph{Ast\'{e}risque}, 261:405¨C-443, 2000.


\bibitem[Thu]{T} W. Thurston, {Polynomial dynamics from Combinatorics to Topology}, 1-109 in \emph{Complex
Dynamics: Families and Friends}, ed. Dierk Schleicher, A K Peters, Wellesley, MA, 2009.

\bibitem[Ze]{Ze} J. Zeng, \emph{Quasisymmetric rigidity, carpet Julia sets and the landing of dynamical and parameter rays,} Phd Thesis, Fudan University and Universit\'{e} d'Angers, 2015, \url{http://okina.univ-angers.fr/publications/ua13589}

}

\end{thebibliography}
\end{document}